\documentclass[en, biber
, secnum]{cls/myamspaper}
\title{Compact right topological semigroups applied to operators}

\date{November 20, 2023}
\addbibresource{bib/mybib.bib}

\usepackage{stmaryrd}
\usepackage{tikz-cd}
\usepackage{todonotes}
\renewcommand{\K}{\mathcal{K}}
\newcommand{\sub}{\subseteq}
\newcommand{\rev}{E_\mathrm{rev}}
\newcommand{\aws}{E_\mathrm{aws}}
\newcommand{\per}{\sigma_{\mathrm{per}}}
\newcommand{\w}{{\sigma^*}}
\begin{document}

\begin{abstract}
  \setlength\parindent{0em}
  \setlength\parskip{0.3\baselineskip}

  In this paper we introduce a new decomposition of power-bounded operators, analogous to the Jacobs-deLeeuw-Glicksberg decomposition (see \newline\cite[Chap. 15.6]{Eisner2016}). This is done using so-called Köhler semigroups and the general theory of right topological compact semigroups. This leads to a new proof, under appropriate assumptions, of the cyclicity of the peripheral spectrum of positive operators on Banach lattices. 

  \textbf{Mathematics Subject Classification (2020)}. Primary  47D03. Secondary 20M30, 54D35.
\end{abstract}

\maketitle

\section{The Structure of Köhler Semigroups} 
Our central objects are Köhler semigroups of power-bounded operators $T$ on dual Banach spaces $E$, meaning that $E$ is, in addition to the norm topology, equipped with a weak-* topology on $E$ for which the operator $T$ is continuous. So there exists a Banach space $F$ and an operator $S$ on $F$ such that the dual space of $F$ is $E$ and the adjoint of $S$ is $T$.  Note that the weak-* topology might differ for different choices of the predual, nevertheless we will usually not mention the pre-dual space of $E$ explicitly.
\begin{definition}
    For any power-bounded weak-* continuous operator $T$ on some dual Banach space $E$ we define the \textit{Köhler semigroup} $\K(T)$ to be the closure 
    \[\K(T)=\overline{\{T^n:\,n\in\N\}}^{\w}\sub E^E\]
    with respect to pointwise weak-* convergence. 
\end{definition}
Köhler semigroups have first been considered by Witz \cite{Witz1964} and investigated in detail by Köhler \cite{kohler1994enveloping} (see also \cite{romanov2011weak,Kreidler2018CompactOS}) and are the operator-theoretic analogue to the Ellis semigroups in topological dynamics (see \cite{ellis1960semigroup}, or \cite{romanov2011weak,pym1990compact}). 

We now show that every such Köhler semigroup is (canonically) a factor of the Stone-{\v C}ech compactification $\beta\N$ of the natural numbers. The space $\beta\N$ has the universal property that every map from $\N$ to some compact Hausdorff space has a unique continuous extension to the compactification $\beta\N$ of $\N$. Further, it can be identified with all ultrafilters on $\N$ and forms a compact \textit{right topological} semigroup, i.e., for every $p\in\beta\N$ the map $s\mapsto s+p$ is continuous. Here $\N$ is identified with the set of all fixed ultrafilters and the (non-commutative!) addition $p+q$ of two ultrafilters $p,\,q\in\beta\N$ is the ultrafilter consisting of all sets $A\sub \N$ such that 
\[\{n\in\N: A-n\in q\}\in p.\]

For more on $\beta\N$ we refer to \cite{hindman2011algebra}. The next theorem is essentially \cite[Theorem 3.2]{kohler1994enveloping}.

\begin{theorem}\label{frombetan}
The Köhler semigroup $\K(T)$ (of a power-bounded operator $T$ with preadjoint) is a factor of $\beta\N$ via the continuous semigroup-epimorphism $p\mapsto\lim_{n\to p} T^n$ from $\beta\N$ to $\K(T)$, where $\lim_{n\to p}T^n$ denotes the limit of the sequence $T^n$ along the ultrafilter $p$.   
\end{theorem}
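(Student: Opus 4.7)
The plan is to define $\varphi: \beta\N \to E^E$ as the unique continuous extension of $n \mapsto T^n$ furnished by the universal property of $\beta\N$, then to identify its image with $\K(T)$, and finally to verify that it intertwines the ultrafilter addition on $\beta\N$ with composition on $\K(T)$.

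First I would observe that power-boundedness gives $\|T^n x\| \le C \|x\|$ with $C := \sup_n \|T^n\| < \infty$, and that each norm-closed ball of radius $C\|x\|$ in $E$ is weak-* compact by Banach--Alaoglu. Hence $\{T^n : n \in \N\}$ lies inside the Tychonoff product $\prod_{x\in E}\overline{B(0,C\|x\|)}$, which is compact in $E^E$ with the pointwise weak-* topology. I would then invoke the universal property of $\beta\N$ to obtain the unique continuous extension $\varphi: \beta\N \to E^E$, and use the standard description of ultrafilter limits in compact Hausdorff spaces to identify $\varphi(p) = \lim_{n\to p} T^n$. Surjectivity onto $\K(T)$ follows because $\varphi(\beta\N)$ is compact, hence closed in $E^E$, and contains $\{T^n\}$, so $\K(T)\sub\varphi(\beta\N)$; conversely, each $\varphi(p)$ is itself a pointwise weak-* limit of the $T^n$ and therefore lies in $\K(T)$.

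For the homomorphism property I would use weak-* continuity of $T$, and hence of every iterate $T^n$, together with the defining identity $\lim_{k\to p+q}f(k)=\lim_{n\to p}\lim_{m\to q}f(n+m)$ for ultrafilter sums into a compact Hausdorff space. Applied pointwise at $x\in E$ this gives
$$\varphi(p+q)(x) \;=\; \lim_{n\to p}\lim_{m\to q} T^{n+m} x \;=\; \lim_{n\to p} T^n\bigl(\lim_{m\to q} T^m x\bigr) \;=\; \lim_{n\to p} T^n\varphi(q)(x) \;=\; \varphi(p)\bigl(\varphi(q)(x)\bigr),$$
where the second equality pushes $T^n$ through the inner ultrafilter limit by its weak-* continuity, and the last one unpacks the definition of $\varphi(p)$ at the fixed vector $\varphi(q)(x)\in E$. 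Hence $\varphi(p+q)=\varphi(p)\circ\varphi(q)$, which together with continuity and surjectivity gives the claim.

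The main obstacle will be justifying that middle equality: one has to check both that a weak-* continuous operator commutes with weak-* ultrafilter limits, and that the resulting outer iterated limit really does unfold into $\varphi(p)$ applied at the single vector $\varphi(q)(x)$. Both are consequences of the universal property of ultrafilter limits in compact Hausdorff spaces, but they carry the substantive content of the semigroup-homomorphism claim.
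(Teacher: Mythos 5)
Your proposal is correct and follows essentially the same route as the paper: compactness of $\prod_x \overline{B(0,C\|x\|)}$ to define the extension, identification of the image with the closure $\K(T)$, and weak-* continuity of $T$ to push it through the inner ultrafilter limit. The only cosmetic difference is that you invoke the iterated-limit formula $\lim_{k\to p+q}f(k)=\lim_{n\to p}\lim_{m\to q}f(n+m)$ as a known fact, whereas the paper re-derives it by chasing which sets belong to $p+q$; both are fine.
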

\begin{proof}
First note that the sequence $(T^n)_{n\in\N}$ is bounded and hence contained in some $B_r(E)^E$, where $B_r(E)$ denotes the closed ball in $E$ with radius $r$ centered at $0$. Since $B_r(E)$ is weak-*-compact, $B_r(E)^E$ is compact and thus $\{T^n:\,n\in\N\}$ is relatively compact with respect to the pointwise weak-* convergence. Thus for any ultrafilter $p\in\beta\N$ the sequence $(T^n)$ converges along $p$, i.e., $p\mapsto \lim_{n\to p}T^n$ is well-defined. 
For the continuity of $p\mapsto \lim_{n\to p}T^n$ take any open subset $O$ in $E^E$ and denote by $A\sub\N$ the set of indices $n$ such that $T^n\in O$. By definition of the limit the preimage of $O$ consists of all ultrafilters containing $A$. However this corresponds to the fixed filter $\Bar{A}\sub \beta\N$ and any fixed filter is a clopen set in $\beta\N$ (as its complement in $\beta\N$ is a fixed filter as well). Thus, by definition, the map $p\mapsto \lim_{n\to p}T^n$ is continuous.

For any (open) set $U$ the set of indices $A=\{n\in\N\colon T^n\in U\}$ is in $p+q$ precisely if the set $B\coloneqq \{n\in \N \colon T^n\lim_{k\to q}T^k\in U\}$ is in $p$, as by weak-*-continuity of $T$ we can rewrite $B$ as
\[\{n\in \N\colon  \lim_{k\to q} T^{n+k}\in U\}=\{n\in \N\colon \{k \colon T^{n+k}\in U\}\in q\}=\{n\in \N\colon A-n\in q\}\]
and by definition of $p+q$ this lies in $p$ exactly for $A\in p+q$. This means that both limits converge to the same point and thus $p\mapsto \lim_{n\to p}T^n$ is indeed a semigroup homomorphism.

Since $\K(T)$ is the closure of $\{T^n\colon n\in\N\}$, every point in $\K(T)$ is an ultrafilter limit of elements in $\{T^n\colon n\in\N\}$. This clearly corresponds to an ultrafilter in $\beta\N$, so the map is an epimorphism.
\end{proof}
The following example from \cite[Example 3.5]{kohler1994enveloping} shows that the space $\beta\N$ itself can also be seen as a Köhler semigroup, and the above epimorphism can be an isomorphism.
\begin{example}    The Köhler semigroup of the left shift $L\colon\,(a_n)_{n\in\N}\mapsto (a_{n+1})_{n\in\N}$ on the (dual) space $\ell^\infty(\N)\cong C(\beta\N)$ (with weak*-topology being induced by $\ell^1$) is isomorphic to the Stone-{\v C}ech compactification $\beta\N$ of the natural numbers.
\end{example}
\begin{proof} 
Based on Theorem \ref{frombetan} it suffices to show the injectivity of $p\mapsto\lim_{n\to p}L^n$. Thus consider different ultrafilters $p\ne q\in \beta\N$. Then there exists a subset $A\sub \N$ with $A\in p$, but $A\notin q$, so $A^c\in q$. Consider the characteristic sequence $\1_{A+1}$ of the set $A+1$. Then \[\{n\in \N:\, (L^n \1_{A+1})(1)=1\}=A,\] and thus 
$(\lim_{n\to p}L^n\1_{A+1})(1)=1$ but $(\lim_{n\to q}L^n\1_{A+1})(1)=0$,
as the weak-* topology on bounded subsets in $\ell^\infty$ is the topology of pointwise convergence on $\N$. Hence $\lim_{n\to p} L^n\ne \lim_{n\to q}L^n$, so $\K(T)\cong \beta\N$.
\end{proof}

The structure of $\beta\N$ directly implies various properties of Köhler semigroups, see e.g. \cite[Lem. 2.2]{Kreidler2018CompactOS}, \cite[Prop. 2.3]{kohler1994enveloping}.
\begin{corollary}
    The Köhler semigroup $\K(T)$ of a power-bounded operator $T$ is a compact right topological semigroup such that $T$ commutes with every element of $\K(T)$. 
\end{corollary}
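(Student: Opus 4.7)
The plan is to verify the three assertions separately, drawing the semigroup structure from Theorem~\ref{frombetan} and using weak-* continuity of $T$ to handle the commutation.

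Compactness and the semigroup structure come essentially for free: Theorem~\ref{frombetan} provides a continuous semigroup epimorphism $\pi\colon\beta\N\to\K(T)$, $p\mapsto\lim_{n\to p}T^n$, so $\K(T)$ is a compact semigroup as the continuous image of the compact semigroup $\beta\N$. (Alternatively, compactness is already visible in the proof of Theorem~\ref{frombetan} from $\K(T)\sub B_r(E)^E$ being closed in a product of weak-* compact balls.)

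For the right-topological structure I would argue directly from the definition of pointwise weak-* convergence. Fix $S\in\K(T)$; since $(RS)(x)=R(Sx)$ for every $x\in E$, the right-multiplication map $R\mapsto RS$ is, coordinate by coordinate, the evaluation $R\mapsto R(Sx)$ at the fixed vector $Sx$, which is continuous by definition of the pointwise weak-* topology on $E^E$. Continuity in each coordinate of the product gives continuity of $R\mapsto RS$, as required.

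For the commutation assertion, I would pass to pointwise weak-* limits using that $T$ is weak-* continuous. Every $R\in\K(T)$ has the form $R=\lim_{n\to p}T^n$ for some $p\in\beta\N$, so for any $x\in E$ the net $T^n x$ converges weak-* to $Rx$ along $p$, and weak-* continuity of $T$ yields
\[
T(Rx)=\lim_{n\to p}T(T^n x)=\lim_{n\to p}T^{n+1}x=\lim_{n\to p}T^n(Tx)=R(Tx),
\]
so $TR=RT$. (Equivalently, $T=\pi(1)$ and a quick check from the definition shows that $1\in\N\sub\beta\N$ commutes with every ultrafilter, and $\pi$ transports this commutation to $\K(T)$.)

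The argument is mostly bookkeeping; the only point I want to flag is the right--left asymmetry. A generic $R\in\K(T)$ need \emph{not} itself be weak-* continuous, so left multiplication $R\mapsto SR$ is not continuous in general. This is precisely the reason one obtains a right topological, rather than a fully topological, semigroup structure, and it explains why the commutation of $T$ with every element of $\K(T)$ must be read off from the weak-* continuity of $T$ itself rather than from some abstract topological symmetry.
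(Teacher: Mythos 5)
Your proof is correct and takes essentially the approach the paper intends: the paper leaves this corollary to citations, but compactness via the epimorphism of Theorem~\ref{frombetan}, the coordinatewise verification of right-topologicality, and the commutation of $T$ with $\lim_{n\to p}T^n$ via weak-* continuity are exactly the arguments the paper itself uses (the last one reappearing verbatim in Proposition~\ref{centers}). Your closing remark on the right--left asymmetry is also accurate and consistent with the paper's discussion of topological centers.
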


Note that $\beta\N$ is a highly non-commutative semigroup. The degree of non-commutativity is directly related with the continuity of the left-multiplication. Recall that the \textit{algebraical center} of a right topological semigroup consists of all elements commuting with every other element of the semigroup, while the \textit{topological center} consists of all elements $R$ for which the left-multiplication $S\mapsto RS$ is continuous. 

\begin{proposition}\label{centers}
The algebraical and topological centers of $\K(T)$ coincide and contain all operators having a preadjoint.
\end{proposition}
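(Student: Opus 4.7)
The plan is to first observe that every $S\in\K(T)$ is by definition a pointwise weak-* limit $S=\lim_\alpha T^{n_\alpha}$ of a net of powers of $T$, and then to combine this approximation with the preceding corollary, which places $T$ in the algebraical center of $\K(T)$. That corollary is the crucial bridge: it lets us commute each $T^{n_\alpha}$ past any prospective central element.

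First I would handle the claim for operators with a preadjoint. If $R\in\K(T)$ has a preadjoint, then $R$ is itself weak-* continuous. Membership in the topological center is then direct, since $S_\alpha\to S$ pointwise weak-* means $S_\alpha x\to Sx$ weak-* for every $x\in E$, and weak-* continuity of $R$ immediately propagates this to $RS_\alpha x\to RSx$. Membership in the algebraical center follows by approximating $S=\lim_\alpha T^{n_\alpha}$, pulling $R$ inside the limit using its weak-* continuity, and then exchanging $RT^{n_\alpha}=T^{n_\alpha}R$ via the corollary; evaluating at any $x$ collapses $RSx$ to $SRx$.

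For the coincidence of the two centers, both inclusions follow the same pattern. The inclusion ``algebraical $\sub$ topological'' is essentially free: if $R$ commutes with every $S_\alpha\in\K(T)$, then $RS_\alpha=S_\alpha R$ and $RS=SR$, so evaluating the convergence $S_\alpha\to S$ at the vector $Rx$ yields $RS_\alpha x=S_\alpha Rx\to SRx=RSx$. The reverse inclusion ``topological $\sub$ algebraical'' runs in the opposite direction: for $R$ in the topological center, continuity of left multiplication gives $RS=\lim_\alpha RT^{n_\alpha}$, the corollary converts this to $\lim_\alpha T^{n_\alpha}R$, and pointwise convergence at $Rx$ identifies the limit as $SR$.

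I do not anticipate a serious obstacle; the argument is essentially a matter of correctly threading two convergences — pointwise weak-* convergence of operators in the ambient product $E^E$ and weak-* convergence of vectors in $E$ — and invoking the preceding corollary at the right moment. The one place where the corollary is truly indispensable is in the ``topological $\sub$ algebraical'' direction, where it supplies the commutation $RT^{n_\alpha}=T^{n_\alpha}R$ needed to turn an a priori one-sided continuity property into genuine commutation with every element of $\K(T)$.
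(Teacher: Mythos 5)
Your proposal is correct and follows essentially the same route as the paper's proof: weak-* continuity places preadjoint operators in the topological center, the inclusion of the algebraical center in the topological one comes for free from the right-topological structure, and the reverse inclusion uses density of $\{T^n\colon n\in\N\}$ together with the fact that each $T^n$ commutes with all of $\K(T)$. The only cosmetic difference is that you work with nets where the paper uses (ultra)filter limits.
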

\begin{proof}
The proof follows essentially \cite[Thm 4.24]{hindman2011algebra}.
First note that all operators with preadjoint are in the topological center of $\K(T)$ as they are weak-* continuous. Each $T^n$ is in the algebraical center of $\K(T)$ since $\N$ is the algebraical center of $\beta\N$ (see \cite[Thm 6.10]{hindman2011algebra}). Clearly the algebraical center is contained in the topological center.

Let $R$ be in the topological center of $\K(T)$, and fix any $S\in \K(T)$. Since $\{T^n:n\in\N\}$ is dense in $\K(T)$, there exists a filter $s$ on $\N$ such that $S=\lim_{n\to s}T^n$. This implies
\[ RS=R\lim_{n\to s}T^n=\lim_{n\to s}RT^n=\lim_{n\to s}T^nR=SR.\]
~
\end{proof}

\begin{remark}
    The above proof actually shows  that if the algebraical center of a right topological semigroup is dense in the semigroup, then it coincides with the topological center. 
\end{remark}
Next, we see that well known monothetic semigroup compactifications as, e.g., the Jacobs-deLeeuw-Glicksberg compactification of \cite[Chap 15.6]{Eisner2016} are special cases of  Köhler semigroups. The statement is Proposition 2.5 in \cite{kohler1994enveloping}. 

\begin{proposition}\label{waskkomp}
If the operator $T\colon E\to E$ has relatively weakly/norm compact orbits, then $\K(T')$ consists of the adjoint operators of the closure of the semigroup $\{T^n:n\in\N\}$ in weak/norm operator topology. In particular it is a commutative semitopological semigroup.
\end{proposition}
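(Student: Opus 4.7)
The plan is to show that the adjoint map $\Phi\colon R\mapsto R'$ provides a bijection between the weak (respectively strong) operator-topology closure $\mathcal{S}:=\overline{\{T^n\}}$ inside $\mathcal{L}(E)$ and the Köhler semigroup $\K(T')$, and that this bijection transports the algebraic and topological structure.

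The first preliminary step is to observe that relatively weakly (resp.\ norm) compact orbits of $T$ force $\{T^n\}$ to be relatively compact in the weak (resp.\ strong) operator topology on $\mathcal{L}(E)$. The standard trick is to embed the semigroup into the product $\prod_{x\in E}\overline{\{T^n x\}}$, whose factors are compact by hypothesis; Tychonoff gives compactness of the product, and power-boundedness ensures that every limit point is a bounded linear operator.

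The central computation is the continuity of $\Phi$ from WOT (or SOT) on bounded operators on $E$ to the pointwise weak-$*$ topology on $(E')^{E'}$. Indeed, by the duality $\langle Rx,y'\rangle=\langle x,R'y'\rangle$, a net $R_\alpha\to R$ in WOT is equivalent to $R_\alpha'y'\to R'y'$ weak-$*$ for every $y'\in E'$, which is precisely convergence in $(E')^{E'}$. Consequently $\Phi(\mathcal{S})$ is the continuous image of a compact set, hence closed in $(E')^{E'}$, and since it contains the dense subset $\{(T')^n\}$ of $\K(T')$ it must equal $\K(T')$. Conversely, given any $S\in\K(T')$, a defining net $(T')^{n_\alpha}\to S$ admits, by the first step, a subnet of $T^{n_\alpha}$ converging in $\mathcal{S}$ to some $R$; continuity of $\Phi$ together with uniqueness of subnet limits forces $\Phi(R)=S$, so $\Phi$ is surjective.

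Finally, commutativity of $\mathcal{S}$ follows by a density argument: WOT/SOT is separately continuous on norm-bounded sets and $\{T^n\}$ is commutative, so commutativity extends to the closure. The map $\Phi$ transports this to $\K(T')$, and the right-topological multiplication of $\K(T')$ is then automatically also left-continuous, yielding a semitopological semigroup. The main obstacle I anticipate is the subnet extraction argument in the surjectivity of $\Phi$ and handling the two parallel settings (weakly compact orbits with WOT versus norm compact orbits with SOT) in a notationally uniform way.
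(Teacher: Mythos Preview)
Your proposal is correct and follows essentially the same route as the paper: both arguments use the duality $\langle Rx,y'\rangle=\langle x,R'y'\rangle$ to identify WOT-convergence of $T^n$ with pointwise weak-$*$ convergence of $(T')^n$, and both exploit relative compactness of orbits to produce a preadjoint for every element of $\K(T')$. The only cosmetic difference is that the paper phrases the surjectivity step via ultrafilter limits $\lim_{n\to r}T^n$ (using the $\beta\N$ framework set up earlier), whereas you use Tychonoff and subnet extraction directly; your additional remarks on commutativity and the semitopological structure are spelled out more explicitly than in the paper but add nothing new.
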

\begin{proof}
   Let $T$ have relatively weakly compact orbits $\{T^nx:\,n\in\N\}$ for all $x\in E$. Denote the pointwise weak closure of $\{T^n:n\in\N\}$ by $\mathcal{S}$. Clearly the adjoint operators $\{R':\, R\in \mathcal{S}\}$ are contained in the Köhler semigroup $\K(T')$ on $E'$. Further, for any $R=\lim_{n\to r} T'^n\in \K(T')$ (with suitable $r\in\beta\N$) we have that the pointwise weak limit $H=\lim_{n\to r} T^n$ exists and 
   \[ \langle H'x, y\rangle =\lim_{n\to r}\langle T^nx,y\rangle =\lim_{n\to r}\langle x,T'^ny\rangle=\langle x,\lim_{n\to r}T'^ny\rangle=\langle x,Ry\rangle.\]
   Thus $H'=R$ and hence $R$ has a preadjoint in $\mathcal{S}$.

   If furthermore $T$ even has relatively norm-compact orbits, then the weak closure coincides with the norm closure, and the statement follows by the preceeding argument.    \end{proof}

One of the most important theorems about compact right-topological semigroups is that they always posess idempotents, and even minimal idempotents. This will lead to decompositions of our Banach space and the given operator. 

 \begin{definition} 
    For a semigroup $\mathcal{S}$ an idempotent $P=P^2\in \mathcal{S}$ is called a \textit{minimal idempotent} if it is minimal with respect to the order on all idempotents  in $\mathcal{S}$ given by $P\le Q$ if $P=PQ=QP$.
\end{definition}
Idempotents in $\beta\N$ are well understood. We recall the following results, which can be found, e.g., in \cite[Lem 5.19.1, Thm 5.12 and Thm 6.9]{hindman2011algebra}.  
\begin{lemma}\label{lemma:betaN}
The idempotents in $\beta\N$ are precisely the ultrafilters $p$ such that for each $A\in p$ the set $\{n\in\N:\, A-n\in p\}$ is in $p$. In particular the following holds.
\begin{itemize}
    \item Each idempotent in $\beta\N$ contains the sets $k\N$ for each $k\in\N$.
\item For a subset $A\sub \N$ there exists an idempotent $p\in\beta\N$ with $A\in p$ if and only if there exists an infinite sequence $(x_n)_{n\in\N}\sub \N$ such that all finite sums of elements $x_n$ are contained in $A$.
\item 
Further, $\beta\N$ contains $2^{2^{|\N|}}$ minimal left ideals, right ideals and idempotents.
\end{itemize}
\end{lemma}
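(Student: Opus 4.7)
The plan is to prove each of the four assertions in turn, all starting from the formula for addition on $\beta\N$ recalled before the lemma. The first assertion, the characterization of idempotents, is essentially a matter of definitions: unwinding $p+p$ shows that $A\in p+p$ iff $\{n\in\N:A-n\in p\}\in p$, so the condition $p+p=p$ becomes the stated equivalence for every $A\sub\N$. Since $p$ is an ultrafilter, checking the one-sided implication spelled out in the lemma is enough.

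For the second item I would exploit the universal property of $\beta\N$. The canonical homomorphism $\N\to\mathbb{Z}/k\mathbb{Z}$ extends to a continuous semigroup homomorphism $\pi_k\colon\beta\N\to\mathbb{Z}/k\mathbb{Z}$. The image $\pi_k(p)$ of an idempotent $p$ is again idempotent, and the only idempotent in a finite group is the neutral element. Hence $p\in\pi_k^{-1}(0)$, which is precisely $k\N\in p$.

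The Hindman-type characterization in the third item requires more care. For the forward direction, given an idempotent $p$ and $A\in p$, the first item yields $A^\star\coloneqq\{n\in A:A-n\in p\}\in p$. One then inductively selects $x_1<x_2<\cdots$ in $A^\star$ such that $x_{n+1}$ lies in the finite intersection $\bigcap_{F\sub\{1,\ldots,n\}}\bigl(A^\star-\sum_{i\in F}x_i\bigr)$, which still belongs to $p$. A routine induction on the cardinality of $F$ then shows that every finite sum $\sum_{i\in F}x_i$ lies in $A$. For the converse I would invoke the Ellis--Numakura lemma on the compact subset $\bigcap_{N}\overline{FS((x_n)_{n\ge N})}\sub\beta\N$; verifying that this intersection is in fact a subsemigroup is the only nontrivial point, and any idempotent in it contains~$A$.

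The cardinality statement in the last item will be the main obstacle. The upper bound $2^{2^{|\N|}}=|\beta\N|$ is immediate, but the matching lower bound requires exhibiting that many pairwise disjoint minimal right ideals (equivalently, minimal left ideals, or minimal idempotents, using that each minimal left ideal contains a minimal idempotent). The route I would take is to first produce $2^{2^{|\N|}}$ pairwise incomparable elements of the smallest ideal $K(\beta\N)$ by arithmetic manipulations of free ultrafilters, and then use the fact that the minimal right ideals partition $K(\beta\N)$ to lift this count. I expect this combinatorial counting to be the delicate part, whereas the preceding items reduce to short applications of compactness and the algebra of ultrafilters.
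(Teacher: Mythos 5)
A preliminary remark: the paper does not prove this lemma at all — it is recalled from Hindman--Strauss with explicit references, so there is no in-paper argument to compare yours against. Judged against the standard proofs (which are the ones in that reference), your treatment of the first three items is correct and follows the same route. The characterization of idempotents is indeed pure unwinding of the addition formula, and your remark that the one-sided implication suffices is justified because $\{n:\,A^c-n\in p\}$ is the complement of $\{n:\,A-n\in p\}$, so an ultrafilter cannot satisfy the forward implication for both $A$ and $A^c$ unless the backward implication holds as well. The $k\N$ claim via the extended homomorphism $\pi_k\colon\beta\N\to\mathbb{Z}/k\mathbb{Z}$ is the standard argument. In the Galvin--Glazer induction for the third item, one step deserves to be spelled out: knowing $\sum_{i\in F}x_i\in A^\star$ only gives $A-\sum_{i\in F}x_i\in p$ directly, whereas your finite intersection requires $A^\star-\sum_{i\in F}x_i\in p$; this needs the sub-lemma that $n\in A^\star$ implies $(A-n)^\star\sub A^\star-n$, and accordingly the induction hypothesis must be that all finite sums lie in $A^\star$, not merely in $A$. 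With that insertion the argument closes, and the converse via the compact subsemigroup $\bigcap_N\overline{\mathrm{FS}((x_n)_{n\ge N})}$ is exactly right.

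The last item is where your plan has a genuine gap. Exhibiting $2^{2^{|\N|}}$ elements of $K(\beta\N)$ and then appealing to the partition of $K(\beta\N)$ into minimal right ideals gives no lower bound on the number of those ideals: the partition could a priori consist of a single cell, and in fact a single minimal left ideal of $\beta\N$ is already an infinite closed subset of $\beta\N$ and hence by itself has $2^{2^{|\N|}}$ points, so counting points of $K(\beta\N)$ can never count ideals. (It is also unclear what ``pairwise incomparable elements'' is supposed to mean.) What is actually needed for minimal left ideals is a family of $2^{2^{|\N|}}$ pairwise \emph{disjoint} left ideals — for instance of the form $\beta\N+p$ for a suitably separated family of free ultrafilters $p$ — since each closed left ideal contains a minimal left ideal and disjoint left ideals yield distinct minimal ones; as every minimal left ideal contains an idempotent lying in no other minimal left ideal, this also gives the count of minimal idempotents. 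The count of minimal \emph{right} ideals does not follow by symmetry (left and right are genuinely asymmetric in $\beta\N$: minimal left ideals are closed, minimal right ideals in general are not) and requires a separate, substantially harder construction. This is precisely the content of the results the paper cites from Hindman--Strauss, and your sketch does not reach it.
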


\section{Decompositions Induced by Köhler Semigroups}
Again, let $T$ be any power bounded operator with preadjoint on a dual Banach space $E$. 
Now we are interested in decompositions of our Banach space into $T$-invariant subspaces obtained by idempotents (i.e. projections) in the Köhler semigroup $\K(T)$. 
\begin{proposition}\label{idemfr}
 Idempotents in $\K(T)$ exist and are given by the images of idempotents in $\beta\N$ under the canonical projection.
\end{proposition}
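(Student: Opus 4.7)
The plan is to combine the semigroup epimorphism $\pi\colon \beta\N\to\K(T)$, $p\mapsto \lim_{n\to p}T^n$, provided by Theorem \ref{frombetan}, with Ellis's theorem that every compact right topological semigroup contains an idempotent. The statement splits into two directions: that every image of an idempotent in $\beta\N$ is an idempotent in $\K(T)$, and conversely that every idempotent of $\K(T)$ arises in this way. Existence of idempotents in $\K(T)$ then follows from Lemma \ref{lemma:betaN}.

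The first direction is essentially a formal consequence of $\pi$ being a semigroup homomorphism: if $p\in\beta\N$ satisfies $p+p=p$, then $\pi(p)^2=\pi(p+p)=\pi(p)$, so $\pi(p)$ is idempotent in $\K(T)$.

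For the converse, which is the substantive step, I would fix an idempotent $Q\in\K(T)$ and look at its preimage $\pi^{-1}(Q)\sub\beta\N$. Since $\pi$ is surjective and continuous, $\pi^{-1}(Q)$ is nonempty and closed, hence compact. Furthermore, for any $p,q\in\pi^{-1}(Q)$ one has $\pi(p+q)=\pi(p)\pi(q)=Q^2=Q$, so $p+q\in\pi^{-1}(Q)$; that is, $\pi^{-1}(Q)$ is a subsemigroup of $\beta\N$. Being a closed subsemigroup of the compact right topological semigroup $\beta\N$, the set $\pi^{-1}(Q)$ is itself a compact right topological semigroup, so Ellis's theorem produces an idempotent $p\in\pi^{-1}(Q)$, and this idempotent satisfies $\pi(p)=Q$ by construction.

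There is no real obstacle; the only point that requires care is verifying that the preimage $\pi^{-1}(Q)$ is closed under the (noncommutative) addition of $\beta\N$, which uses precisely that $\pi$ is a semigroup homomorphism and that $Q$ is idempotent. The existence assertion in the proposition is then subsumed, since $\beta\N$ contains idempotents by Lemma \ref{lemma:betaN} and their images under $\pi$ furnish idempotents of $\K(T)$.
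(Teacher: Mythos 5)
Your argument is correct and follows exactly the paper's own proof: the forward direction is the formal fact that a semigroup homomorphism maps idempotents to idempotents, and the converse applies Ellis's lemma to the preimage of the idempotent, which is a nonempty closed subsemigroup of $\beta\N$. You merely spell out the verification that the preimage is a subsemigroup, which the paper leaves implicit.
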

\begin{proof}
    For any idempotent ultrafilter $p\in\beta\N$ the image $\lim_{n\to p}T^n\in\K(T)$ under the canonical epimorphism is again an idempotent.
    
    If we take any idempotent $P\in\K(T)$, then the preimage of $\{P\}$ in $\beta\N$ is a closed subsemigroup. Thus, the preimage is a compact right topological semigroup and hence contains an idempotent by a well known lemma of Ellis \cite{ellis1958distal}.
\end{proof}

Combining the previous results we arrive at our main theorem.
\begin{theorem}\label{rechtstop-JdLG}
Let $T$ be a power-bounded operator with preadjoint on a dual Banach space $E$. Every idempotent $P$ in the Köhler semigroup $\K(T)$ yields a decomposition of $E$ into $E=\rev\oplus\aws$, where  $\rev\coloneqq\im{P}$, and $\aws\coloneqq\ker{P}$. This decomposition fulfills the following properties.
    \begin{enumerate}
        \item The spaces $\rev$ and $\aws$ are invariant under $T$ and also under all operators in $\K(T)$ with preadjoint.
        \item The restricted operator $T|_{\rev}$ is invertible. If $P$ is a minimal idempotent (they always exist!), every operator in $P\K(T)$ is invertible on $\rev$ with inverse in $P\K(T)$.
        \item All eigenvectors of $T$ corresponding to unimodular eigenvalues are contained in $\rev$. Each $x\in \rev$ is weak-* accumulation point of its own orbit $\{T^n x:\,n\in\N\}$.
        \item If $T^nx$ converges to $0$, then $x\in\aws$. Each $x\in\aws$ has $0$ as a weak-* accumulation point of its orbit $\{T^nx:\, n\in\N\}$.
    \end{enumerate}
\end{theorem}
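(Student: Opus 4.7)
The plan is to leverage Proposition~\ref{idemfr}, which writes any idempotent $P\in\K(T)$ as $P=\lim_{n\to p}T^n$ for an idempotent ultrafilter $p\in\beta\N$, and to combine this with Lemma~\ref{lemma:betaN} (in particular that $k\N\in p$ for every $k$, so $p$ is non-principal) and the commutation statements in Proposition~\ref{centers}. The decomposition $E=\im P\oplus\ker P$ is standard since $P$ is a bounded projection, and for~(1) the Corollary following Proposition~\ref{centers} yields $TP=PT$ (and $RP=PR$ for any $R\in\K(T)$ with preadjoint), giving $T$- and $R$-invariance of both summands.

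For~(2), the idea is to produce an inverse of $T$ on $\rev$ from a shifted ultrafilter: non-principality of $p$ makes $q:=\{A\subseteq\N : A+1\in p\}$ a well-defined element of $\beta\N$, and $S:=\lim_{n\to q}T^n\in\K(T)$ then satisfies $TS=ST=\lim_{n\to q}T^{n+1}=\lim_{m\to p}T^m=P$, using that $T$ lies in the topological center so it may be pulled through the ultrafilter limit (Proposition~\ref{centers}), and that $T$ commutes with every element of $\K(T)$. Since $P$ acts as the identity on $\rev$, this already gives invertibility of $T|_{\rev}$ with inverse $S|_{\rev}$. For the sharper statement when $P$ is minimal, I would invoke the classical Rees--Suschkewitsch structure theorem that $P\K(T)P$ is a group with identity $P$; then for any $R\in\K(T)$, $PR|_{\rev}=PRP|_{\rev}$ is invertible inside this group, with inverse still in $P\K(T)P\subseteq P\K(T)$. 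This minimality-based half is the main obstacle I expect, as it rests on nontrivial structure theory of compact right topological semigroups; a reference to~\cite{hindman2011algebra} would carry the weight.

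For~(3), given an eigenvector $Tx=\lambda x$ with $|\lambda|=1$, the scalar $\ell:=\lim_{n\to p}\lambda^n$ satisfies $\ell=\ell^2$ by precisely the semigroup-homomorphism computation in the proof of Theorem~\ref{frombetan}, and since $|\ell|=1$ this forces $\ell=1$, hence $Px=x\in\rev$. For arbitrary $x\in\rev$, the identity $x=Px=\lim_{n\to p}T^n x$ together with non-principality of $p$ puts infinitely many $T^n x$ into every weak-* neighborhood of $x$, giving the accumulation property. Part~(4) is dual: convergence $T^n x\to 0$ forces $Px=0$ and thus $x\in\aws$, while for any $x\in\aws$ the relation $0=Px=\lim_{n\to p}T^n x$ combined with non-principality of $p$ again yields $0$ as a weak-* accumulation point of the orbit.
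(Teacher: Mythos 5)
Your proposal is correct and follows essentially the same route as the paper: shifting the idempotent ultrafilter to invert $T$ on $\rev$, the group structure of $P\K(T)P$ for a minimal idempotent, and the scalar ultrafilter limit $\lim_{n\to p}\lambda^n=1$ for unimodular eigenvalues. The only nitpick is that the inverse of $T|_{\rev}$ should be written $PS|_{\rev}$ rather than $S|_{\rev}$, since $S$ need not commute with $P$ and hence need not map $\rev$ into itself; the paper avoids this by writing the inverse as $P\lim_{n\to p-1}T^n$.
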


\begin{proof}
Let $P$ be any projection in $\K(T)$ and denote by $p$ an idempotent ultrafilter on $\N$ corresponding to the projection $P$ (they exist by Lemma \ref{idemfr}).  

Part (a) follows directly from Proposition \ref{centers} since all such operators commute with $P$. 

For part (b) first note that for each ultrafilter $p\in\beta\N$ with $p\ne 1$ the set $p-1$ consisting of all sets $A-1\coloneqq \{t\in \N: t+1\in A\}$ for $A\in p$ is again an ultrafilter on $\N$. Further by definition the ultrafilter $1+(p-1)$ consist of all sets $A\sub \N$ such that $1\in \{ n\in \N\colon A-n\in (p-1)\}$, which is equivalent to $A-1\in p-1$ and thus to $A\in p$. This implies $p+1+(p-1)=(p-1)+p+1=p+p=p$ in $\beta\N$. Therefore the operator $T$ restricted to $\rev$, corresponding to $p+1\in \beta\N$, is invertible with inverse $P\lim_{n\to p-1}T^n$ in $\K(T)$ restricted to $\rev$.

 Minimal idempotents in compact right topological semigroups exist by a theorem of Ruppert \cite[Thm I.3.11]{berglund1989analysis}.
If we choose $P$ to be a minimal idempotent, then $P\K(T)P$ is a group (see \cite[Thm I.2.8]{berglund1989analysis}), thus for each $PQ\in P\K(T)$ there exist $PRP$ such that $(PQP)(PRP)=(PQ)(PR)P=P$. Hence, on $\rev$ we have $(PQ)(PR)=\mathrm{Id}$. 

Since the projection $P$ is a pointwise weak-* accumulation point of $\{T^n:\,n\in\N\}$, $Px$ is a weak-* accumulation point of $(T^nx)_{n\in\N}$. 
If $Tx=\lambda x$ for $|\lambda|=1$, the orbit $T^n x$ is contained in the weak-* compact subset $\{\alpha x\colon |\alpha|=1\}$. Thus $Px=\alpha x$ for some $\alpha$ with $|\alpha|=1$. Since $P$ is idempotent, this implies $Px=x$ and thereby part (c). The last assertion for $x\in \aws$ follows analogously.
\end{proof}
\begin{remark}
\begin{enumerate}[(i)]
    \item
    The space $\rev$ is called the reversible part of $E$, and $\aws$ the stable part of $E$ with respect to $P$. This corresponds to the terminology of the Jacobs-deLeeuw-Glicksberg decomposition (see \cite[Chap 15.6]{Eisner2016}). Note that $\rev$ and $\aws$ depend on the chosen idempotent. If we want to emphasise this dependency on the idempotent $P$, we write ${\rev}_P$ and ${\aws}_P$.
    \item Similar decompositions can be obtained for strongly continuous one-parameter semigroups (e.g. in \cite{budde2022application}).
    \item 
    If the preadjoint of $T$ has relatively weakly compact orbits, then there is a unique minimal idempotent in $\K(T)$  and the induced decomposition corresponds to the adjoint of the Jacobs-deLeeuw-Glicksberg decomposition. This follows by Proposition \ref{waskkomp} and the existence of a unique minimal idempotent in the preadjoint semigroup (see e.g. \cite[Thm 16.5]{Eisner2016}). 
    \end{enumerate}

\end{remark}
As $\beta\N$ has $2^{2^{|\N|}}$ minimal idempotents, but the classical Jacobs-deLeeuw-Glicksberg decomposition is unique, one might wonder whether two distinct idempotents might induce the same decomposition.
\begin{lemma}\label{lem:minidem}
The minimal left ideals in $\K(T)$ correspond to $\{{\aws}_P\colon P \text{ min. idem.}\}$, and the minimal right ideals correspond to $\{{\rev}_P\colon P \text{ min. idem.}\}$. Further, for each choice of a stable part in $\{{\aws}_P\colon P \text{ min. idem.}\}$ and a reversible part in $\{{\rev}_P\colon P \text{ min. idem.}\}$, there is precisely one idempotent $P\in \K(T)$ inducing this decomposition. 
\end{lemma}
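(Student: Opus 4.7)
The plan is to reduce the statement to the classical structure theory of minimal ideals in a compact right topological semigroup, combined with the concrete description of ${\rev}_P$ and ${\aws}_P$ as image and kernel of $P$. From this structure theory (see e.g.\ \cite{hindman2011algebra}), the minimal right ideals of $\K(T)$ are exactly the sets $P\K(T)$ with $P$ a minimal idempotent, the minimal left ideals are exactly $\K(T)P$, and for any minimal right ideal $R$ and any minimal left ideal $L$ the intersection $R\cap L$ is a group whose unique idempotent is the minimal idempotent common to both. This already yields a bijection between minimal idempotents and pairs consisting of a minimal right ideal and a minimal left ideal.

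First I would show that every element $PQ$ of the minimal right ideal $P\K(T)$ has image exactly ${\rev}_P = \im P$: the inclusion $\im(PQ)\subseteq \im P$ is immediate, while for the reverse, minimality forces $P$ to lie in the right ideal generated by $PQ$, so $P$ is a $\K(T)$-multiple of $PQ$, giving $\im P\subseteq \im(PQ)$. Symmetrically, every element $RP$ of the minimal left ideal $\K(T)P$ has kernel ${\aws}_P = \ker P$, via the dual argument on preimages.

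Next I would verify that distinct minimal right ideals yield distinct reversible parts. Suppose $P_1,P_2$ are minimal idempotents with $\im P_1 = \im P_2$. For every $x\in E$ the vector $P_2 x$ lies in $\im P_1$ and is therefore fixed by $P_1$, so $P_1 P_2 = P_2$ as operators. Hence $P_2\in P_1\K(T)$, giving $P_2\K(T)\subseteq P_1\K(T)$, and minimality forces equality. The dual identity $\ker P_1 = \ker P_2 \Rightarrow P_1P_2 = P_1$ (via the splitting $E = \im P_2 \oplus \ker P_2$) shows that distinct minimal left ideals produce distinct stable parts. Together with the previous step this establishes both claimed bijections.

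Finally, given a reversible part ${\rev}_{P_1}$ and a stable part ${\aws}_{P_2}$ from the prescribed sets, the bijections just established pick out the unique minimal right ideal $P_1\K(T)$ and the unique minimal left ideal $\K(T)P_2$ inducing these subspaces. Their intersection is a group whose identity $P$ is a minimal idempotent with $\im P = {\rev}_{P_1}$ and $\ker P = {\aws}_{P_2}$, giving existence. For uniqueness, any idempotent $Q\in\K(T)$ inducing the same decomposition has $\im Q = \im P_1$ and $\ker Q = \ker P_2$, so the algebraic identities of the previous paragraph yield $P_1 Q = Q$ and $QP_2 = Q$, placing $Q$ in both $P_1\K(T)$ and $\K(T)P_2$; hence $Q$ lies in the intersection group, and the only idempotent there is $P$. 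The main obstacle is pinning down the pair of algebraic identities $P_1 P_2 = P_2$ and $P_1 P_2 = P_1$ in the two regimes, as these are what convert equalities of image/kernel subspaces into the ideal-theoretic containments that drive the rest of the argument.
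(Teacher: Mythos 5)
Your proof is correct and follows essentially the same route as the paper: the same algebraic identities ($\im P_1=\im P_2\Rightarrow P_1P_2=P_2$ and $\ker P_1=\ker P_2\Rightarrow P_1P_2=P_1$) converted into ideal containments, plus the group structure of $R\cap L$ for the final existence/uniqueness claim. If anything, your uniqueness step is slightly more careful than the paper's, since you explicitly rule out non-minimal idempotents inducing the same decomposition by showing any such $Q$ lands in the group $P_1\K(T)\cap\K(T)P_2$.
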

\begin{proof}
All minimal idempotents lie in minimal left ideals, thus there is an obvious surjection from minimal left-ideals onto $\{{\aws}_P\colon P \text{ min. idem.}\}$. It remains to show that two minimal idempotents $P,\, Q$ induce the same space ${\aws}_P={\aws}_Q$ precisely if they are in the same minimal left ideal. If $\ker(P)=\ker(Q)$, then in particular $\ker(Q)\subseteq\ker(P)$ and hence $PQ=P$, so $\K(T)P\subseteq \K(T)Q$ which by minimality implies equality, hence both idempotents correspond to the same minimal left-ideal. For the other implication assume they belong to the same minimal left ideal. This implies that $Q\in \K(T)P$ and $P\in \K(T)Q$, i.e., $\ker(P)\sub \ker(Q)$ and $\ker(Q)\sub \ker(P)$, so ${\aws}_P={\aws}_Q$.

Similar reasoning can be made for $\rev$. For two minimal idempotents $P,\, Q$ the statement ${\rev}_P={\rev}_Q$ is equivalent to $PQ=Q$ and $QP=P$, which is equivalent to $P\K(T)=Q\K(T)$.

Finally for any minimal right ideal $R$ and minimal left ideal $L$ the intersection $L\cap R$ is a group, which has exactly one idempotent $e\in L\cap R$ such that $R\cap L=RL=e\K(T)e$ with $e\K(T)=R$ and $\K(T)e=L$ (see \cite[Thm 1.61]{hindman2011algebra}). This implies the last statement.
\end{proof}
In the general case this means that the decomposition of Theorem \ref{rechtstop-JdLG} is highly non-unique.
\begin{example}
Consider the left shift $L$ on the space $\ell^\infty$ and 
define the space of all almost periodic sequences in $\ell^\infty$ as the norm-closure of the periodic sequences 
    \[\mathcal{A}\coloneqq\overline{\{f\in\ell^\infty: f \text{ is periodic}\}}.\]
        For every decomposition $\ell^\infty=\rev\oplus\aws$ we have $\mathcal{A}\subsetneqq \rev$ and $c_0\subsetneqq\aws$. 
        Further there are $2^{2^{|\N|}}$ many different spaces $\aws$ and $\rev$.
\end{example}
    \begin{proof}
    Take any periodic sequence $f$ and any idempotent $P\in \K(T)$. Then there is a $k\in \N$ such that $L^{nk}f=f$ for all $n\in\N$. But as the set $k\N$ is contained in any idempotent ultrafilter (see Lemma \ref{lemma:betaN}), hence $f=\lim_{n\to P}L^nf$. Now $\mathcal{A}\sub \Im(P)={\rev}_P$ follows immediately by (norm)-continuity of $P$. 
     To see $\mathcal{A}\subsetneqq {\rev}_P$, note that by Lemma \ref{lem:minidem} different minimal right ideals induce spaces ${\rev}_P,\,{\rev}_Q$ which are not included in each other. Thus we conclude that for minimal idempotents $\mathcal{A}\subsetneqq {\rev}_P$, because there are $2^{2^{|\N|}}$ minimal right ideals in $\beta\N\simeq \K(T)$ (see Lemma \ref{lemma:betaN}). For arbitrary idempotents $Q'$ the statement $\mathcal{A}\subsetneqq {\rev}_{Q'}$ follows since there exists a minimal idempotent $P\le Q'$, so $\mathcal{A}\subsetneqq {\rev}_P\sub {\rev}_{Q'}$.

Since for any sequence $(x_m)$ in $c_0$ the sequence (of sequences) $(L^n(x_m)_{m\in\N})_{n\in\N}$ converges pointwise to $0$, we obtain $c_0\sub \aws$. The inclusion $c_0\subsetneqq \aws$ is obtained analogously.
\end{proof}

\section{Application to the Spectral Theory of Positive Operators}
We now use the technique developed above in the context of positive operators on Banach lattices, referring to \cite{schaefer1974Banachlattices} for terminology and notation.

Take a positive power-bounded operator $T$ on a Banach lattice $E$ and assume that $E$ has a Banach lattice $F$ as predual on which $T$ has a positive preadjoint. Then we can construct the Köhler semigroup $\K(T)$ as above and obtain the following result.

\begin{theorem}\label{posJDLG}
    Take a positive power-bounded operator $T$ with preadjoint on a dual Banach lattice $E$  (the preadjoint is assumed to be a positive operator on the predual Banach lattice). Then for any decomposition $E=\rev\oplus\aws$ as in Theorem \ref{rechtstop-JdLG} the following properties hold.
    \begin{enumerate}[(a)]
        \item The space $\rev$ is a Banach lattice under the induced order for some equivalent norm.
        \item The operator $T$ is a lattice-isomorphism on the space $\rev$. 
    \end{enumerate}
\end{theorem}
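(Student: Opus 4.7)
The plan is to realise the reversible part as the range of a positive contractive projection for a suitably chosen equivalent lattice norm, and then invoke the classical theorem (going back to Ando) that such ranges carry a natural Banach-lattice structure. First note that every element of $\K(T)$ is positive: it is a pointwise weak-* limit of the positive operators $T^n$, and the positive cone is weak-* closed. In particular the chosen idempotent $P$ is positive. I would then introduce the equivalent norm
\[
\|x\|_0 := \sup_{n \in \N}\|T^n\,|x|\|,
\]
which, using power-boundedness together with $|T^n x| \le T^n |x|$, is easily verified to be a lattice norm on $E$ equivalent to the original and under which $T$ is a positive contraction. The decisive point is that $P$ is $\|\cdot\|_0$-contractive as well: since $P$ and $T$ commute by Proposition \ref{centers}, one has $T^n\,|Px| \le T^n P|x| = P T^n|x|$, and expressing $P T^n|x|$ as the weak-* limit of $T^{m+n}|x|$ along the idempotent ultrafilter representing $P$, weak-* lower semicontinuity of the norm yields $\|P T^n|x|\| \le \sup_k \|T^k|x|\| = \|x\|_0$.

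With $(E,\|\cdot\|_0)$ a Banach lattice and $P$ a positive contractive projection on it, the quoted theorem provides that the range of $P$, equipped with $\|\cdot\|_0|_{\rev}$ and with the operations $x \vee_P y := P(x \vee y)$ and $x \wedge_P y := P(x \wedge y)$, is a Banach lattice. A direct verification shows that these operations coincide with the lattice operations of the order induced from $E$ on $\rev$: for $x, y \in \rev$ the element $P(x \vee y)$ majorises both $x$ and $y$, and any $z \in \rev$ which also does satisfies $z = Pz \ge P(x \vee y)$. Since $\|\cdot\|_0$ is equivalent to the original norm on $E$, this settles (a).

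For (b), Theorem \ref{rechtstop-JdLG}(b) exhibits the inverse of $T|_{\rev}$ as the restriction of $P\lim_{n\to p-1}T^n$; this operator lies in $\K(T)$ and is therefore positive by the observation of the first paragraph. Hence $T|_{\rev}$ is a positive bijection on $\rev$ with positive inverse, which forces it to be a lattice homomorphism by the standard two-sided argument: $T(x \vee_P y) \ge Tx \vee_P Ty$ by positivity of $T$, while applying the positive inverse to the upper bound $Tx \vee_P Ty$ of $\{Tx, Ty\}$ gives $x \vee_P y \le T^{-1}(Tx \vee_P Ty)$ and hence the reverse inequality after another application of $T$. The main technical obstacle is the renorming step and specifically the contractivity of $P$, since it is precisely this property that bridges the abstract semigroup structure on $\K(T)$ and the Banach-lattice theorem we wish to apply.
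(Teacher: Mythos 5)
Your proof is correct and follows essentially the same route as the paper: every element of $\K(T)$ is positive because the cone is weak-* closed, so $P$ is a positive projection whose range is a Banach lattice for the induced order and an equivalent norm, and $T|_{\rev}$ is a positive bijection with positive inverse in $P\K(T)$, hence a lattice isomorphism. The only difference is that the paper obtains part (a) by directly citing \cite[Prop.~III.11.5]{schaefer1974Banachlattices} for ranges of positive projections, whereas you re-derive that step through the explicit renorming $\|x\|_0=\sup_{n}\|T^n|x|\|$ making $T$ and $P$ contractive before applying the contractive-projection theorem; both are valid, and your contractivity estimate for $P$ (via $|Px|\le P|x|$, commutation with $T^n$, and weak-* lower semicontinuity of the dual norm) checks out.
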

 \begin{proof}
     The positive cone in $E$ is weak-* closed, therefore all operators in $\K(T)$ are positive. Thus every projection $P$ is a positive projection and by \cite[Prop. III.11.5]{schaefer1974Banachlattices} the space $\rev=\im{P}$ is a Banach lattice under the induced order and for an equivalent norm. Statement $(b)$ follows since $T$ is positive and invertible on $\rev$ with positive inverse in $P\K(T)$, hence a lattice isomorphism on the space $\rev$.
 \end{proof}

We now apply this information to the spectral theory of positive operators. The classical results in the finite dimensional case are due to O. Perron and G. Frobenius and have been generalized to positive operators on Banach lattices by many authors. We refer to \cite[Chap V.4]{schaefer1974Banachlattices} for a systematic treatment. In particular the peripheral spectrum of positive operators fulfills various symmetry properties. 
However the question whether every positive operator on every Banach lattice has cyclic peripheral spectrum is still open. See \cite{lotz1968spektrum} and \cite{gluck2016peripheral} for the terminology and the most general results about this cyclicity problem. 

Using the Köhler semigroup of the given positive operator, we are able to prove cyclicity in a special, but fundamental case. We expect that this technique will lead to new insights and results. 
\begin{theorem}\label{cyc}
    The unimodular spectrum of any power-bounded positive operator $T$ on a Banach lattice $E$ is cyclic.
\end{theorem}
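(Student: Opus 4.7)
The plan is to pass to the bidual so that the Köhler-semigroup machinery of the preceding sections applies, reduce the cyclicity question to the reversible part of the induced decomposition (where $T$ acts as a bijective lattice homomorphism with bounded two-sided iterates), and there invoke the classical cyclicity result for lattice isomorphisms.

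First, the bidual $T''\colon E''\to E''$ is a positive, power-bounded operator on the dual Banach lattice $E''$, whose preadjoint $T'$ is positive; moreover $\sigma(T)=\sigma(T'')$. It therefore suffices to prove cyclicity for $\sigma(T'')\cap\mathbb{T}$, and all of our previous theorems become available. Fixing any minimal idempotent $P\in\mathcal{K}(T'')$ and the decomposition $E''=E''_{\mathrm{rev}}\oplus E''_{\mathrm{aws}}$ of Theorem \ref{rechtstop-JdLG}, the restriction $S\coloneqq T''|_{E''_{\mathrm{rev}}}$ is a positive lattice isomorphism on the Banach lattice $E''_{\mathrm{rev}}$ by Theorem \ref{posJDLG}, and by Theorem \ref{rechtstop-JdLG}(b) its inverse lies in $P\mathcal{K}(T'')|_{E''_{\mathrm{rev}}}$ and is therefore also positive. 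The two-sided group $\{S^n:n\in\mathbb{Z}\}$ is uniformly bounded (being contained in the compact semigroup $P\mathcal{K}(T'')P|_{E''_{\mathrm{rev}}}$), so $\sigma(S)\subseteq\mathbb{T}$.

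The central step is then to capture each $\lambda\in\sigma(T'')\cap\mathbb{T}$ in $\sigma(S)$ for some minimal idempotent $P$. For unimodular eigenvalues this is immediate from Theorem \ref{rechtstop-JdLG}(c), since unimodular eigenvectors lie in $E''_{\mathrm{rev}}$ for every idempotent. For a general $\lambda\in\sigma(T'')\cap\mathbb{T}$ (necessarily in the approximate point spectrum by power-boundedness), start from an approximate eigenvector $(x_n)$ with $\|T''x_n-\lambda x_n\|\to 0$ and extract a weak-* cluster point $x$ using weak-* compactness of the unit ball of $E''$; if $x\neq 0$, weak-* continuity of $T''$ gives $T''x=\lambda x$ and we fall into the eigenvalue case. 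The degenerate case in which all weak-* cluster points vanish is the main technical obstacle: here one exploits the abundance of minimal idempotents in $\mathcal{K}(T'')$ (stemming from the $2^{2^{\aleph_0}}$ minimal idempotents in $\beta\mathbb{N}$, Lemma \ref{lemma:betaN}) to choose $P$ corresponding to an ultrafilter $p$ along which $\lambda^{-n}T''^n x_n$ develops a nontrivial weak-* limit in $E''_{\mathrm{rev}}$, invoking Theorem \ref{rechtstop-JdLG}(d) to rule out the entire spectral mass being retained in $E''_{\mathrm{aws}}$.

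Finally, on $E''_{\mathrm{rev}}$ the operator $S$ is a bijective lattice homomorphism whose two-sided iterates form a relatively compact group of positive operators in $\mathcal{K}(T'')$. The closure $G=\overline{\{S^n:n\in\mathbb{Z}\}}$ is thus a compact abelian topological group of lattice automorphisms, and passing to the Gelfand spectrum of the commutative $C^*$-algebra generated by $S,\,S^{-1}$ identifies $\sigma(S)$ with the image of the continuous characters of $G$ evaluated at $S$, which is a subgroup of $\mathbb{T}$ and hence cyclic. This also follows directly from the classical Perron--Frobenius arguments for lattice homomorphisms in \cite[Chap. V.4]{schaefer1974Banachlattices}. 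Combining with the spectrum capture, for every $\lambda\in\sigma(T)\cap\mathbb{T}$ we obtain $\lambda^n\in\sigma(S)\subseteq\sigma(T'')=\sigma(T)$ for all $n\in\mathbb{Z}$, which is exactly cyclicity.
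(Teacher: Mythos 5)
Your overall strategy (reduce to the reversible part, where $T$ becomes a lattice isomorphism, and use Perron--Frobenius-type cyclicity there) matches the paper's, but there is a genuine gap at exactly the step you yourself flag as ``the main technical obstacle'': capturing a unimodular spectral value $\lambda$ that is not an eigenvalue. Passing to the bidual does \emph{not} turn approximate eigenvalues into eigenvalues (take the bilateral shift on $\ell^2(\Z)$: the space is reflexive, the spectrum is the whole circle, and there are no eigenvectors at all), and your proposed fix does not work. If every weak-* cluster point of the approximate eigenvector sequence $(x_n)$ is zero, no choice of minimal idempotent or ultrafilter will produce a nonzero limit of $\lambda^{-n}T''^{\,n}x_n$: the failure lies in the sequence $(x_n)$ itself, not in the dynamics, and Theorem \ref{rechtstop-JdLG}(d) is a statement about individual orbits accumulating at $0$ --- it contains no mechanism for ``ruling out the spectral mass being retained in $E''_{\mathrm{aws}}$.'' The paper closes this gap with a step you omit entirely: before taking biduals it passes to an ultrapower $\hat E$ of $E$, where by \cite[Thm V.1.4]{schaefer1974Banachlattices} the unimodular (approximate point) spectrum of $T$ becomes genuine point spectrum of $\hat T$; only then does Theorem \ref{rechtstop-JdLG}(c) place the relevant eigenvectors inside the reversible part. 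Without the ultrapower your argument proves cyclicity only of the peripheral \emph{point} spectrum of $T''$, not of the peripheral spectrum of $T$.

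Two smaller issues in your final step. First, $P\K(T'')P$ is a group in a compact \emph{right topological} semigroup, not a compact topological group, so the closure $\overline{\{S^n:n\in\Z\}}$ need not be a topological group and the character/Gelfand argument (which in any case invokes a ``commutative $C^*$-algebra generated by $S$'' that does not exist in a general Banach lattice) is not justified; you would at best get cyclicity of $\sigma(S)$ from a separate theorem on lattice isomorphisms. Second, what the cited classical results give readily is cyclicity of the peripheral \emph{point} spectrum of a lattice homomorphism --- which is exactly what the paper uses, via the representation of the principal ideal $\Tilde E_{|x|}$ as $C(K)$ and the identity $\Tilde T x^k=(\Tilde T x)^k=\lambda^k x^k$ --- whereas your route needs cyclicity of the full spectrum $\sigma(S)$, a stronger statement that again is most naturally obtained by an ultrapower reduction to the point-spectrum case. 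So the ultrapower is not an optional refinement here; it is the load-bearing step.
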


\begin{proof}
    In a first step we pass to an ultraproduct $\hat{E}$ (see \cite[Chap V.1]{schaefer1974Banachlattices}), then we embed $\hat{E}$ into its bidual $(\hat{E})''$. After this we take a projection in the Köhler semigroup of the induced operator $\hat{T}''$ on this space and restrict onto the corresponding reversible part $((\hat{E})'')_{\mathrm{rev}}$, as visualized in the following diagram.  
    \[\begin{tikzcd}
E & {\hat{E}} & {(\hat{E})''} 
&\Tilde{E}\coloneqq {((\hat{E})'')_{\mathrm{rev}}}\\
T&\hat T &\hat{T}''&\Tilde T\coloneqq (\hat{T}'')|_{\rev}
	\arrow["\mathrm{ultraprod.}", hook, from=1-1, to=1-2]
	\arrow["\mathrm{bidual}",hook, from=1-2, to=1-3]
	\arrow["{\mathrm{restr.}}", twoheadrightarrow, from=1-3, to=1-4]
 \arrow[ from=2-1, to=2-2]
	\arrow[ from=2-2, to=2-3]
	\arrow[ from=2-3, to=2-4]
\end{tikzcd}\]
We denote the space $((\hat{E})'')_{\mathrm{rev}}$ by $\Tilde E$ and the restriction of the operator $\hat{T}''$ onto $\Tilde{E}$  by $\Tilde{T}$.
    
    For the induced operator of $T$ on $\hat{E}$ the unimodular spectrum (which we denote by $\per(T)$) consists of point spectrum $\sigma_{\mathrm{pnt}}(T)$ only, see \cite[Thm V.1.4]{schaefer1974Banachlattices}. Since there is a canonical embedding into the bidual space and the spectrum is invariant under taking adjoint operators, the unimodular spectrum of the induced operator on $(\hat{E})''$ is point spectrum as well. By Theorem \ref{rechtstop-JdLG} the eigenfunctions to unimodular eigenvalues are contained in $((\hat{E})'')_{\mathrm{rev}}$ and we conclude that \[\per(T)=\per(\hat{T}'')\sub \sigma_{\mathrm{pnt}}(\Tilde T).\] 
    Since $\Tilde T$ is a restriction of $(\hat T)''$ onto a closed subspace, we conclude that the approximate point spectrum satisfies $\sigma_{\mathrm{app}}(\Tilde{T})\sub\sigma(T)$ (see e.g. \cite[Lem 4.2]{grobler1995spectral}) and thereby \[\per(T)=\per(\Tilde{T})=\sigma_{\mathrm{pnt}}(\Tilde{T})\cap\sigma_{\mathrm{ per}}(\Tilde{T}).\] 
    
So without loss of generality it suffices to show that for any positive lattice isomophism $\Tilde T$ on a Banach lattice $\Tilde{E}$ the peripheral point spectrum is cyclic. Thus take any eigenvector $x$ to an unimodular eigenvalue $\lambda$. Then the ideal $\Tilde{E}_{|x|}$ is isomorphic to a space $C(K)$ of continuous functions on a compact Hausdorff space $K$ by the Theorem of Gelfand-Kakutani (see \cite[Chap. II.7]{schaefer1974Banachlattices}). So we extend the above diagram as follows. 
   \[\begin{tikzcd}
E & {\hat{E}} & {(\hat{E})''} 
&\Tilde{E}\coloneqq {((\hat{E})'')_{\mathrm{rev}}}&\Tilde{E}_{|x|}& C(K)
	\arrow["\mathrm{ultraprod.}", hook, from=1-1, to=1-2]
	\arrow["\mathrm{bidual}",hook, from=1-2, to=1-3]
	\arrow["{\mathrm{restrict}}", twoheadrightarrow, from=1-3, to=1-4]\arrow[hook,from=1-5,to=1-4]\arrow[leftrightarrow,from=1-5, to=1-6]
\end{tikzcd}\]
Since  $\Tilde T|x|=|\Tilde{T}x|=|x|$, the ideal $\Tilde{E}_{|x|}$ is invariant under $T$, so $\Tilde T$ becomes a positive Markov lattice homomorphism on $C(K)$. Since those operators are precisely the Markov algebra homomorphisms for the multiplication in $C(K)$ (see \cite[III.9.1]{schaefer1974Banachlattices}), we obtain
    \[\Tilde{T}x^k=(\Tilde{T}x)^k=\lambda^k x^k,\]
    and hence $\lambda^k\in\per(T)$, so the peripheral spectrum is cyclic.
\end{proof}
The preceeding proof even shows the following interesting fact.
 \begin{corollary}\label{latics}
To every positive power-bounded operator $T$ with $r(T)=1$ there corresponds a lattice isomophism $\Tilde T$ on another Banach lattice such that \[\per(T)=\per(\Tilde{T})\sub\sigma_{\mathrm{pnt}}(\Tilde{T}).\] 
\end{corollary}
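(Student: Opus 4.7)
The plan is to extract the lattice isomorphism $\Tilde T$ directly from the construction already performed inside the proof of Theorem \ref{cyc}: the corollary is essentially a by-product of that argument, and it only remains to verify that every ingredient is available under the hypothesis that $T$ is positive and power-bounded with $r(T)=1$, and that the chain of spectral identifications there in fact gives an equality.

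First I would redo the three-step construction from the diagram in the proof of Theorem \ref{cyc}: pass to a suitable ultraproduct $\hat E$ with induced operator $\hat T$; embed $\hat E$ into its bidual $(\hat E)''$, producing the positive, weak-*-continuous, power-bounded operator $\hat T''$ on the dual Banach lattice $(\hat E)''$ (whose preadjoint on $(\hat E)'$ is positive). Then I would form the Köhler semigroup $\K(\hat T'')$, pick a minimal idempotent $P$ (it exists by Proposition \ref{idemfr} together with the Ruppert theorem cited in Theorem \ref{rechtstop-JdLG}), and set $\Tilde E$ to be the reversible part of $(\hat E)''$ with respect to $P$, together with $\Tilde T\coloneqq \hat T''|_{\Tilde E}$. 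To see that $\Tilde T$ is a lattice isomorphism on a Banach lattice, I would invoke Theorem \ref{posJDLG}: part (a) yields that $\Tilde E$ is a Banach lattice under the induced order for some equivalent norm, and part (b) yields that $\Tilde T$ is a lattice isomorphism on it.

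For the spectral identity I would argue as inside the proof of Theorem \ref{cyc}. Passing to the ultraproduct the peripheral spectrum is preserved and by \cite[Thm V.1.4]{schaefer1974Banachlattices} consists entirely of point spectrum; taking the bidual preserves the full spectrum; and by Theorem \ref{rechtstop-JdLG}(c) every eigenvector of $\hat T''$ to a unimodular eigenvalue already lies in $\Tilde E$. This chain gives $\per(T)=\per(\hat T'')\sub \sigma_{\mathrm{pnt}}(\Tilde T)$. For the reverse inclusion $\per(\Tilde T)\sub \per(T)$ I would use that $\Tilde E$ is a closed $\hat T''$-invariant subspace: by \cite[Lem 4.2]{grobler1995spectral} we have $\sigma_{\mathrm{app}}(\Tilde T)\sub\sigma(\hat T'')=\sigma(T)$, and since $\Tilde T$ is invertible with $r(\Tilde T)=1$ its peripheral spectrum coincides with its peripheral approximate point spectrum, closing the identity.

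The main obstacle I expect is the equality $\per(T)=\per(\Tilde T)$ rather than a one-sided inclusion: the ultraproduct-plus-bidual passage promotes approximate point spectrum to point spectrum while preserving the peripheral spectrum, whereas the restriction to $\Tilde E$ on its own only controls the approximate point spectrum of $\Tilde T$. The invertibility of $\Tilde T$, delivered by the minimal idempotent in the Köhler semigroup via Theorem \ref{rechtstop-JdLG}(b), is precisely what bridges this gap, and it is the point at which all three ingredients --- ultraproduct, Köhler idempotent, and lattice-isomorphism property --- have to be made to fit together.
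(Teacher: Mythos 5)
Your proposal is correct and follows essentially the same route as the paper: the corollary is extracted verbatim from the construction in the proof of Theorem \ref{cyc} (ultraproduct, bidual, restriction to the reversible part of a Köhler idempotent), with Theorem \ref{posJDLG} giving the lattice-isomorphism property and the approximate-point-spectrum inclusion $\sigma_{\mathrm{app}}(\Tilde T)\sub\sigma(T)$ closing the equality $\per(T)=\per(\Tilde T)$. Your added remark that invertibility of $\Tilde T$ is what lets one identify $\per(\Tilde T)$ with its peripheral approximate point spectrum makes explicit a step the paper leaves implicit.
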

\begin{remark}
   The above arguments also work for various generalizations. 
    \begin{enumerate}[(i)]
        \item 
  For power bounded uniformly asymptotically positive operators \newline(i.e. $d(T^nx,E_+)\to 0$ uniformly), see \cite{Glueck2017}, all operators in $\K(T)$ corresponding to $\beta\N\setminus\N$ are positive, hence also all projections in $\K(T)$ and the induced operators on $\rev$ are positive. Thus one obtains the statements of Theorems \ref{posJDLG} and \ref{cyc} in the case of power bounded uniformly asymptotically positive operators as well. Theorem \ref{posJDLG} only needs individually  asymptotical positivity, the uniform convergence is only needed when taking the ultraproduct.
  
    \item The arguments work similarly for other discrete semigroups $S$ instead of $\N$. If one takes the semigroup to be commutative, then an analogue of Theorem \ref{rechtstop-JdLG} holds if $\K(T)$ is replaced by the pointwise weak-* closure of a bounded representation of $S$ on $E$, and the projection is chosen to be a minimal idempotent in this closure.
    \item A generalization to non power-bounded operators is possible, e.g., for Abel-bounded positive operators. Here one assumes that there exists a positive projection onto the fixed space of $T$ in the (uniformly) closed convex hull of $\{T^n:\,n\in\N\}$. Since this projection dominates $T$ on the space of all superfixed vectors (i.e. $x\ge 0$ such that $Tx\ge x$), one obtains power-boundedness of the operator $T$ restricted to some principal ideal and then proceeds with the power-bounded theory. Similarly, one can prove the cyclicity result for positive (WS)-solvable operators from  \cite{gluck2016peripheral}. This in turn can be combined with the generalizations from part $(i)$.
      \end{enumerate}
\end{remark}
\end{document}